\newtheorem{theorem}{Theorem}[]
\newtheorem{prop}[]{Proposition}
\newtheorem{lemma}[]{Lemma}
\newtheorem{conjecture}[]{Conjecture}
\newtheorem{defi}[theorem]{Definition}
\theoremstyle{definition}
\renewcommand{\leq}{\leqslant}
\renewcommand{\geq}{\geqslant}
\renewcommand{\P}{\mathbb{P}}
\newcommand{\defeq}{=} % if we ever want to use := instead of = for definition...
\newcommand{\Cor}{\ensuremath{\hbox{\textup{Cor}}}}
\renewcommand{\c}[2]{\ensuremath{[#1 | #2]}}
\newcommand{\I}{\mbox{}}
\newcommand{\motif}[2]{\ensuremath{\textbf{Patt}_{#2}(#1)}}
\newcommand*\samethanks[1][\value{footnote}]{\footnotemark[#1]}
\title{{\bf On cases where Litt's game is fair}}
\author{
A-L. Basdevant \thanks{Sorbonne Université.\hfill  \href{mailto:}{\texttt{anne.laure.basdevant@normalesup.org}}} \ \& \
Olivier Hénard \thanks{Universit\'e Paris-Saclay.
\hfill  \href{mailto:olivier.henard@universite-paris-saclay.fr}{\texttt{olivier.henard@universite-paris-saclay.fr}} \\
 \hspace*{\fill}  \href{mailto:edouard.maurel-segala@universite-paris-saclay.fr}{\texttt{edouard.maurel-segala@universite-paris-saclay.fr}} \\ 
\hspace*{\fill}  \href{mailto:arvind.singh@universite-paris-saclay.fr}{\texttt{arvind.singh@universite-paris-saclay.fr}} 
}
\ \& \
Édouard Maurel-Ségala \samethanks
\ \& \
Arvind Singh \samethanks
}
\begin{document}

\maketitle

\begin{abstract}
A fair coin is flipped $n$ times, and two finite sequences of heads and tails with the same length are given, say $A$ and $B$. Each time the word $A$ appears in the sequence of fair coin flips, Alice gets a point, and each time the word $B$ appears, Bob gets a point. Who is more likely to win?  This puzzle is a slight extension of Litt's game \cite{LittX} that recently set Twitter abuzz. 
We show that Litt's game is fair for any value of $n$ and any two words that have the same auto-correlation structure by building up a bijection that exchanges Bob and Alice scores; the fact that the inter-correlations do not come into play in this case may come up as a surprise. 
\end{abstract}

\selectlanguage{english}

\maketitle

\vspace{1.3cm}

\section{Introduction and main results}

In \cite{LittX}, Litt came up with the following puzzle: a fair coin is tossed $n$ times, Alice gets a point each time the sequence HH appears, while Bob scores a point each time the sequence HT appears (and these sequences may be overlapping). The game may result in a win for Alice, a win for Bob or a tie (the possibilities being exclusive). Who is more likely to win, Alice or Bob ? 
% This simple question turns out to be tricky, and rigorous solutions to the problem require ingeniosity and/or long calculations.
% Since this post, several papers have given proofs or ideas for proofs to answer  this question \cite{}. In particular, 
The perhaps surprising answer (if one judges by the Twitter poll) is that:
\begin{enumerate}
    \item for any size $n\geq 3$, Bob is more likely to win than Alice,
    \item for $n$ large, it holds:
$$2 \times \big(\P_n( \mbox{Bob wins})-\P_n(\mbox{Alice wins})\big) \sim \P_n(\mbox{Tie}) \sim \frac{1}{\sqrt{\pi n}}.$$
\end{enumerate} 
Several elements of proofs for the first fact quickly emerged on Twitter \cite{RameshX1, RameshX2, SelbyX, MalloX}, then more formal proofs of both facts appeared  on the arXiv \cite{Zeilberger24, Segert24}. Nica \cite{NicaYouTube}  recorded a YouTube video to introduce the problem to a wider audience. Zeilberger online journal \cite{Zeilbergerjournal} maintains a list of contributions and questions on the problem; among the proofs given, we would like to advertise an early probabilistic proof : in \cite{RameshX1}, Ramesh builds a ``fair majorant" for the score difference of Alice and Bob that consists in a delayed simple random walk; the score difference of Alice and Bob  is a deterministic function of this walk, at distance at most 1 below it, see right after the bibliography for more details. A rigorous proof of both facts 1 and 2 is then at an easy reach. Another by-product of his approach is that giving Alice an initial advantage of only one point reverses the statement of fact 1 in a strong sense: for any size $n \geq 0$, Alice then has probability strictly greater than $1/2$ of winning. In particular, Alice is more likely to win than Bob. This approach has been detailed in Grimmett \cite{Grimmett24}, who recently rediscovered the arguments of Ramesh.

\medskip

Considering a contest between HT and TH is also possible, but arguably less interesting: flipping the sequence of tosses, or reading the sequence in the reverse order  exchanges Alice and Bob points, which results in a fair game. Our aim in this note is to uncover more hidden symmetries within the words ensuring the game is again tied up.

\medskip

% In this short note, we examine the same problem but for other sequence than HH vs. HT.
%  More precisely, 

 For $\ell\geq 0$, a \emph{word} of length $\ell$ is a sequence $A=(a_1,\ldots,a_\ell)$ in $\{\text{H},\text{T}\}^\ell$. For $X_n:=(\varepsilon_k)_{1\leq k\leq n}$ a finite sequence in $\{\text{H},\text{T}\}^n$, we denote by
$$N_A(X_n):=|\{\ell\leq k\leq n , (\varepsilon_{k-\ell+1},\ldots,\varepsilon_k)=(a_1,\ldots,a_\ell)\} |$$
the number of occurrences of the word $A$ in the sequence $X_n$.
Let $A$ and $B$ be two words of length $\ell$.
% We consider the same question as before except that Alice now chooses the word $A$ and Bob the word $B$. 
In the generalized Litt's game, Alice wins if $N_A(X_n)>N_B(X_n)$, Bob wins if $N_B(X_n)>N_A(X_n)$, and this is a Tie otherwise.
A key quantity encoding the intersections of $A$ and $B$ is
the correlation of $A$ and $B$, which may be represented as a subset of integers or simply as a number (the base 2 expansion of the subset):
$$\c{A}{B} \defeq \sum_{k\in \Cor(A,B)} 2^k \quad\text{ where }\quad \mbox{Cor}(A,B) \defeq\{1\leq k \leq \ell-1,\; (a_{\ell-k+1},\ldots,a_\ell)=(b_1,\ldots,b_k)\}.$$
To be more specific, we shall call \emph{inter-correlation} the correlation of two distinct words (beware the order matters), and \emph{auto-correlation} the correlation of a word with itself.
Our main result in this note is that Litt's game, which is played with a uniform $X_n$, is fair for words $A$ and $B$ with the same auto-correlation, regardless of their inter-correlation. 

\begin{theorem}\label{bijection} Let $A$, $B$ two words of length $\ell$ such that $\c{A}{A}= \c{B}{B}$.
Assume that, under $\P_n$, the letters $(\varepsilon_i)_{1\leq i\leq n}$ of $X_n$ form an i.i.d. sequence 
with the uniform distribution on $\{\text{H},\text{T}\}$.
Then for each $n\geq 1$, $(N_A(X_n),N_B(X_n))$  and $(N_B(X_n),N_A(X_n))$ have the same distribution. In particular, for any $n\geq 1$,
$$\P_n( \emph{Bob wins})=\P_n(\emph{Alice wins}).$$
\end{theorem}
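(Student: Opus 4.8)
The plan is to deduce the distributional identity from a purely combinatorial fact. Since $X_n$ is i.i.d. $\mathcal{B}(1/2)$, its law is uniform on $\{0,1\}^n$, so it suffices to construct a bijection $\Phi\colon\{0,1\}^n\to\{0,1\}^n$ that exchanges the two scores, i.e. $N_A(\Phi(x))=N_B(x)$ and $N_B(\Phi(x))=N_A(x)$ for every $x$. Indeed, writing $E_{p,q}:=\{x:N_A(x)=p,\;N_B(x)=q\}$, such a $\Phi$ sends $E_{p,q}$ injectively into $E_{q,p}$, so $|E_{p,q}|=|E_{q,p}|$ by symmetry; dividing by $2^n$ gives equality of the laws of $(N_A,N_B)$ and $(N_B,N_A)$, from which $\mathbb{P}_n(\text{Bob wins})=\mathbb{P}_n(\text{Alice wins})$ follows at once. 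Thus the entire problem reduces to producing $\Phi$.

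The engine of the construction should be the observation that the autocorrelation controls exactly how occurrences of a single word can overlap. If $W$ occurs ending at positions $i<j$ with $j-i<\ell$, the two copies are compatible if and only if $\ell-(j-i)\in\Cor(W,W)$, and in that case the shared letters are forced. Hence a maximal run of overlapping $W$-occurrences -- a \emph{cluster} -- is described by its sequence of successive overlaps, each lying in $\Cor(W,W)$, and its total length is $\ell$ plus the sum of the corresponding gaps. Because $\c{A}{A}=\c{B}{B}$, the admissible overlap sequences for $A$ and for $B$ coincide; this yields a canonical length- and type-preserving bijection between clusters of $A$ and clusters of $B$ (same number of copies, same gaps), under which the $A$-count and the $B$-count are interchanged inside the cluster. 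I would then try to globalise this by parsing $x$ into maximal clusters separated by letters belonging to no occurrence, and defining $\Phi$ to replace each $A$-cluster by its partner $B$-cluster and vice versa while fixing the free letters. As a sanity check, the same mechanism already explains the classical fact that the marginal law of $N_A$ depends only on $\c{A}{A}$; the novelty here is the joint statement.

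The main obstacle -- and the reason the independence from the inter-correlation is surprising -- is that the naive parse into \emph{pure} $A$- and $B$-clusters breaks down precisely when $\c{A}{B}$ or $\c{B}{A}$ is nonzero: an occurrence of $B$ may then overlap an $A$-cluster, producing mixed blocks in which copies of $A$ and of $B$ are interleaved. Relabelling such a block by $A\leftrightarrow B$ does not obviously return an admissible configuration, since an $A$-then-$B$ overlap is governed by $\c{A}{B}$ whereas its image would need to be a $B$-then-$A$ overlap, governed by the \emph{different} number $\c{B}{A}$. The crux of the proof is therefore to design the parsing so that these mixed interactions cancel and $\Phi$ is nonetheless well defined and invertible. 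I expect this to require a carefully chosen canonical decomposition (for instance one scanning in a fixed direction and resolving overlaps by a fixed priority rule, making reversibility manifest), or alternatively an induction on $n$ that peels off the leftmost cluster together with its boundary, reducing to a shorter sequence. Verifying that no spurious occurrences are created or destroyed at the cluster boundaries after the swap is the delicate technical point on which the whole argument hinges.
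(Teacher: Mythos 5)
You set up the right reduction (the law of $X_n$ is uniform, so it suffices to exchange the scores combinatorially), and you correctly isolate the central difficulty: when $\c{A}{B}$ or $\c{B}{A}$ is nonzero, occurrences of $A$ and $B$ interleave into mixed blocks, and naively relabelling $A\leftrightarrow B$ inside such a block would require $\Cor(A,B)=\Cor(B,A)$, which fails in general. But your proposal stops exactly there: ``design the parsing so that these mixed interactions cancel'' and ``I expect this to require a carefully chosen canonical decomposition \dots or alternatively an induction'' is a statement of the problem, not a solution. The missing idea is the paper's key trick: on an overlap block $Y=C_1\I^{m_1}C_2\I^{m_2}\cdots\I^{m_{k-1}}C_k$ (with $C_i\in\{A,B\}$ and $m_i\in\Cor(C_i,C_{i+1})$) one does not merely swap letters, one swaps \emph{and reverses the order}, setting $\phi(Y)=\bar C_k\I^{m_{k-1}}\bar C_{k-1}\cdots\I^{m_1}\bar C_1$. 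Then a mixed junction $(C_i,C_{i+1})=(A,B)$ is sent to the junction $(\bar C_{i+1},\bar C_i)=(A,B)$ --- the \emph{same} correlation set $\Cor(A,B)$ is what is needed, so the two inter-correlations never have to be compared with each other --- while a same-letter junction $(A,A)$ goes to $(B,B)$, which is exactly and only where the hypothesis $\Cor(A)=\Cor(B)$ enters. Without this reversal, no purely local relabelling rule can work, and your proposal contains no substitute for it.

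There is a second gap you flag but also leave unresolved: the boundary problem (``no spurious occurrences created or destroyed at the cluster boundaries''). In the paper this is not handled by an explicit letter-level bijection at all; one only proves an equality of \emph{cardinalities}. Writing $L^M(I)$ for the number of words with pattern $M=(E_1,\dots,E_k)$ and gap lengths $I=(i_0,\dots,i_k)$, the count factorizes over junctions as a product of terms $L^{(E_j,E_{j+1})}(0,i_j,0)$ and two boundary terms, and the needed identities $L^{(A,A)}(0,i,0)=L^{(B,B)}(0,i,0)$ and $L^{(A)}(i,0)=L^{(B)}(0,i)$ (note the mirrored arguments, again the reversal) are proved by induction on the gap length $i$ via inclusion--exclusion: $2^i$ minus the count of gap fillings that accidentally create occurrences, each of which has a longer pattern and hence strictly shorter gaps, so the induction hypothesis applies. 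The score-exchanging bijection on $\{0,1\}^n$ then exists because corresponding pattern classes have equal size, but it is never written down letter by letter. So your plan of a canonical, explicitly invertible parsing rule is not only incomplete; it aims at something stronger than what the paper establishes, and the delicate point you defer is precisely where the explicit approach must be replaced by counting.
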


For instance, the words $A =$ HHTHTH and $B=$ HTTTHH have the same auto-correlation 2 hence Litt's game played with $A$ and $B$ is fair despite the apparent lack of symmetry between these words. 
%It is a remarkable fact, specific to the case of words with the same autocorrelation, that the law of $(N_A(X_n),N_B(X_n))$ does not depend on the inter-correlations, and this feature is at the heart of our proof.

\medskip

It is then possible to estimate the quantity in the last equality, or equivalently the probability of a tie $\P_{n}(\text{Tie})$, using a Local Central Limit theorem for sums of weakly dependent random variables. We do not venture into this for the following reason : In a recent breakthrough announced online \cite{Nica}, Nica (together with Janson) pointed to a generic method based on Edgeworth expansions to tackle the case of words $A$ and $B$ with possibly distinct auto-correlations; the sketch of proof, together with some basic moment computations, hints to the following asymptotic estimates: for $A \neq B$, as $n$ gets large, the following asymptotics:
\begin{align*}
\P_n( \mbox{Bob wins}) -\P_n(\mbox{Alice wins}) & =   \frac{\c{A}{A} -\c{B}{B}}{ \sqrt{2^\ell + \c{A}{A} +\c{B}{B} - \c{A}{B} -\c{B}{A} } } \cdot \frac{1}{2 \sqrt{\pi n}}+o\Big(\frac{1}{\sqrt{n}}\Big),\\ \P_n( \mbox{Tie}) & = \frac{2^\ell}{\sqrt{2^\ell + \c{A}{A} +\c{B}{B} - \c{A}{B} - \c{B}{A} }} \cdot \frac{1}{2 \sqrt{\pi n 
}}+o(\frac{1}{\sqrt{n}}\Big)
\end{align*}
hold as soon as the denominators on both RHS are non null (the four\footnote{(TH$^{\ell-1}$,H$^{\ell-1}$T) and its siblings, for which $N_{A}(X_n)-N_B(X_n)$ belongs to $\{-1,0,1\}$.} pair of words giving a null denominator are associated with degenerated cases). Shortly after posting a first version of this work, Segert communicated to us a proof, available online \cite{Segert24+} as an addendum to his arXiv paper, of a general statement that entails both estimates.
In case $\c{A}{A} =\c{B}{B}$, Theorem \ref{bijection} refines on the first formula by stating that the quantity is, in fact, identically null for all~$n$. 
% Also, we see that the inter-correlations influence the value (if not the sign) of the advantage of Bob versus Alice in case the autocorrelations are distinct, which contrasts with the case of equal autocorrelations of Theorem \ref{bijection}, in the proof of which the inter-correlations neatly factorize.

\medskip

A key feature of the first formula  is that $\P_n( \mbox{Bob wins}) -\P_n(\mbox{Alice wins})$ has the sign of $\c{A}{A} -\c{B}{B}$. In light of this observation together with some extensive numerical computations\footnote{the conjecture has been checked for all words $A$ and $B$ of size $\leq 10$ and for $n \leq 35$.}, we are lead to state the following remarkable conjecture for a \emph{fixed length} $n$ of the underlying word $X_n$. We use $\Delta$ to denote the symmetric difference of two sets.

\begin{conjecture}
\label{conj-distinct-correl} 
Assume $\c{A}{A}>\c{B}{B}$. Then, for each $n \geq n_0:=2 \ell- \max\{ \Cor(A) \Delta \Cor(B) \}$, 
$$\P_n(\emph{Alice wins}) < \P_n(\emph{Bob wins}) < \frac{1}{2} < \P_n(\emph{Alice wins}) + \P_n(\emph{Tie}).$$  
\end{conjecture}

Some comments are in order. First, the restriction to $n \geq n_0$ is necessary for the first inequality only (the most important one indeed), and we have equality for $n<n_0$. Also, the last two inequalities are equivalent (and valid for each $n \geq 0$), but we like to phrase the conjecture this way, because of the following vivid interpretation of this set of inequalities: even if Alice looses in the standard version of the game (first inequality), if we were to give Alice \emph{one additional point} \footnote{beware this is distinct from starting the random word $X_n$ with an $A$}, then her new winning probability would exceed $1/2$ for each $n$ (last inequality); in particular, Alice would be advantaged over Bob at each subsequent time. Conjecture \ref{conj-distinct-correl} also suggests that the advantage of Bob versus Alice may be put in bijection with a subset of the Tie event, a key feature of the proof by Ramesh indeed. Conjecture \ref{conj-distinct-correl}  is valid for the original Litt's game (HH versus HT) by Ramesh's proof. Also, it is in line with the asymptotics above: to see this, notice the last two inequalities in the conjecture are equivalent to 
$$|\P_n(\text{Alice wins}) - \P_n(\text{Bob wins})| < \P_n(\text{Tie}),$$
which may be checked from the conjectured asymptotics because of the inequality $|\c{A}{A} -\c{B}{B}| \leq 2^{\ell}-2$ that is true for every pair of words $A$,$B$.

\medskip

We should also point that the topic of pattern matching and overlaps has been the subject of many investigations, starting with the non-transitive \emph{Penney Ante game} named after Penney \cite{Penney-69} (and famously solved by Conway) in which one is asked to compute the probability that $A$ appears before $B$ in a sequence of fair coin flips, see \cite{Guibas-Odlyzko-81, Li-80} and the references therein.

Our fixed length conjecture is reminiscent of an important fixed length result in the area of pattern matching : the fact that the number of words $X$ of length $n$ avoiding $A$ is larger than the number of words avoiding $B$ if $[A,A] > [B,B]$ for each $n \geq n_0$ defined in term of $A$ and $B$ as in Conjecture 1, and with equality before. For $n$ large enough (larger than an indefinite constant), the statement has been stated and proved by Guibas and Odlyzko \cite{Guibas-Odlyzko-81} and independently by Blom \cite{Blom84}, while the result for $n \geq n_0$ has been established by Månnson \cite{Mansson02} after a series of works on the topic \cite{Eriksson97,Mansson99}.

\section{Proof of Theorem \ref{bijection}}

The auto-correlation and inter-correlation of two words are quantities that appear naturally when we look at the probability of one word appearing before another in a sequence of coin flips. The formal definition, which we repeat below, is the following. 
  
\begin{defi}Let $A$, $B$ two words of length $\ell$. We define the indices of inter-correlation of $A$ and $B$ by
  $$\Cor(A,B)=\{1\le k \le \ell-1, (a_{\ell-k+1},\ldots,a_\ell)=(b_1,\ldots,b_k)\}.$$
  We write $\Cor(A)$ to denote $\Cor(A,A)$. We define the inter-correlation $\c{A}{B}$  of $(A,B)$ by
  $$\c{A}{B} =\sum_{k\in \Cor(A,B)} 2^k$$
 and the auto-correlation of $A$ as  $\c{A}{A}$.
\end{defi}

\noindent Let us make some remarks about this definition
  \begin{itemize}
      \item The number $\c{A}{B}$ is not in general equal to $\c{B}{A}$.
      \item For all $A,B$ of length $\ell$, $\c{A}{B} \in \llbracket 0; 2^{\ell}-2 \rrbracket$.
      \item For every words $A,B,C,D$ of length $\ell$, $\c{A}{B}=\c{C}{D}$ if and only if $\Cor(A,B)=\Cor(C,D)$.
  \end{itemize}

Fix any two words $A$ and $B$ with length $\ell$ and the same auto-correlation. To prove that $(N_A(X_n),N_B(X_n))$  has the same law as $(N_B(X_n),N_A(X_n))$, we prove the existence of a bijection $\phi$ from $\{$H,T$\}^n$ to $\{$H,T$\}^n$ such that, for any sequence $X_n\in\{$H,T$\}^n$, $(N_A(X_n),N_B(X_n))=(N_B(\phi(X_n)),N_A(\phi(X_n)))$.
The rest of the paper is devoted to the construction of $\phi$.

\medskip

We introduce some additional notation. If $C,D$ are two words, $CD$ will be the concatenation of $C$ and $D$. Besides, for two words $C,D$ of length $\ell$ and $m\in \Cor(C,D)$, we denote $C^{m}D$ the word of length $2\ell-m$ beginning by $C$ and ending by $D$. For example, if $C = \underline{\text{HTTHTH}}$, $D = \overline{\text{THTHHH}}$, we have $C^2D= \underline{\text{HTTH}} \underline{\overline{\mathbf{\text{TH}}}} \overline{\text{THHH}}$. We extend this notation to $k$ words $C_1,\ldots,C_k$ of length $\ell$ and $m_1,\ldots,m_k\geq1$ in the obvious way: if $m_i\in \Cor(C_i,C_{i+1})$ for each $i$, we define the word of length $k\ell-\sum m_i$
\begin{equation}\label{notE}
Y = C_{1}\I^{m_1}C_{2}\I^{m_2}C_{3}\I^{m_3} \ldots C_{k-1}\I^{m_{k-1}}C_{k}.
\end{equation}
% Note that such word exists if and only if $m_i\in \Cor(C_i,C_{i+1})$ for all $i$.
 
\begin{defi} Let $A,B$ be two words of length $\ell$. We call \emph{overlap} of $A$ and $B$ a word $Y$ of the form \eqref{notE} where  $C_1,\ldots,C_k \in \{A,B\}^k$. We denote by $\mathcal{E}(A,B)$ the set of all \emph{overlaps} of $A$ and $B$.
\end{defi}
Note that do not accept $m_i=0$ in \eqref{notE}. In particular, the concatenation $Y=AB$ of $A$ and $B$ may not be in $\mathcal{E}(A,B)$.
We notice also that, for $Y\in \mathcal{E}(A,B)$, the expression of $Y$ in the form given by \eqref{notE} may be not unique. We call the maximal decomposition of $Y$ the one such that, if 
$Y=C_{1}\I^{m_1}C_{2}\I^{m_2}C_{3}\I^{m_3} \ldots C_{k-1}\I^{m_{k-1}}C_{k}$, we have
$$N_A(Y)=|\{ 1\le i \le k, C_i=A\}| \qquad N_B(Y)=|\{ 1\le i \le k, C_i=B\}|.$$
 % \olivier{modulo la prop. suivante ou on veut discuter du fait que la decomposition n'impacte pas $\varphi$ peut etre?}
\noindent We have the following decomposition of a word.
\begin{defi} Let $Y$ a word. There exists a unique way to write $Y$ in the form
\begin{equation}\label{decomp}
Y = X_0 E_1 X_1 E_2 \ldots X_{k-1} E_k X_{k}
\end{equation}
such that
\begin{itemize}
    \item the words $E_1,\ldots, E_k$ belong to $\mathcal{E}(A,B)$.
    \item For all $i\in \llbracket 0,k\rrbracket$, neither $A$ nor $B$ appears in $X_i$ (word $X_i$ may be empty).
    \item $N_A(Y)=\sum_{i=1}^k N_A(E_i)$ and $N_B(Y)=\sum_{i=1}^k N_B(E_i)$.
\end{itemize}
We call \emph{pattern} of the word $Y$ with respect to $A$ and $B$ the words $(E_1,\ldots,E_k)$ which appears in \eqref{decomp} and write 
$\motif{Y}{A,B} = (E_1,E_2,\ldots, E_k)$.
\end{defi}

\begin{prop} Let $A,B$ be two words with the same auto-correlation, and let $\phi:\mathcal{E}(A,B) \to \mathcal{E}(A,B)$ be defined in the following way. If $Y:=C_{1}\I^{m_1}C_{2}\I^{m_2} \ldots C_{k-1}\I^{m_{k-1}}C_{k}$ with $C_i \in \{A,B\}$, we set 
$$\phi(Y)\defeq\bar{C}_{k}\I^{m_{k-1}}\bar{C}_{k-1}\I^{m_{k-2}} \ldots \bar{C}_{2}\I^{m_{1}}\bar{C}_{1}$$
where $\bar{C}_i=A$ if $C_i=B$ and  $\bar{C}_i=B$ if $C_i=A$.
Then $\phi$ is well-defined, it is independent of the decomposition chosen for $Y$, it is an involution and $\phi(Y)$ has the same length as $Y$. Moreover, we have $(N_A(Y),N_B(Y))=(N_B(\phi(Y)),N_A(\phi(Y)))$.
\end{prop}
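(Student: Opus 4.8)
The plan is to derive the whole proposition from one identity on correlations. Writing $\bar A=B$ and $\bar B=A$, I claim that for all $C,D\in\{A,B\}$ one has $\Cor(C,D)=\Cor(\bar D,\bar C)$. Indeed, when $C=D$ the two sides are $\Cor(A,A)$ and $\Cor(B,B)$ in some order, which coincide by the hypothesis $\c{A}{A}=\c{B}{B}$; and when $C\neq D$ one has $(\bar D,\bar C)=(C,D)$, so the identity is trivial. This identity is the engine of the argument: an admissible overlap between two words of $\{A,B\}$ stays admissible, with the very same overlap length, after reversing their order and exchanging $A\leftrightarrow B$.

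First I would fix one decomposition $Y=C_1\I^{m_1}C_2\cdots C_{k-1}\I^{m_{k-1}}C_k$ and settle the easy assertions. In $\phi(Y)=\bar C_k\I^{m_{k-1}}\cdots\I^{m_1}\bar C_1$ the gap sitting between $\bar C_{i+1}$ and $\bar C_i$ is $m_i$, and $m_i\in\Cor(C_i,C_{i+1})=\Cor(\bar C_{i+1},\bar C_i)$ by the identity, so $\phi(Y)$ is a genuine element of $\mathcal E(A,B)$. Its length equals $k\ell-\sum_i m_i=|Y|$, since $\phi$ merely reverses the list of words and the list of gaps. Feeding this decomposition of $\phi(Y)$ back into the formula reverses both lists once more and turns each $\bar{\bar C}_i$ into $C_i$, returning $Y$; hence $\phi$ will be an involution as soon as it is known to be well defined on strings.

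The crux, and the step I expect to be the main obstacle, is that $\phi(Y)$ does not depend on the decomposition chosen for $Y$. Since $\phi$ acts at the level of words rather than bits, this is not a visible string symmetry; the example $01010=A\I^1A=A\I^2B\I^2A$ (with $A=010$, $B=101$) already shows two decompositions that must, and do, produce the same $\phi(Y)=10101$. I would prove independence through the reversal $\overleftarrow{\,\cdot\,}$: a short computation with positions shows that if $C_i$ occupies an interval $I_i$ inside $Y$, then $\overleftarrow{\phi(Y)}$ carries $\overleftarrow{\bar C_i}$ on the very same interval $I_i$. In other words $\overleftarrow{\phi(Y)}$ is obtained from $Y$ by the in-place substitution rewriting every occurrence of $A$ as $\overleftarrow{B}$ and every occurrence of $B$ as $\overleftarrow{A}$. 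It then suffices to see that this substitution is intrinsic to $Y$: any two occurrences of words of $\{A,B\}$ covering a common position $x$ overlap, and their overlap length automatically lies in the inter-correlation of (left word, right word), because both occurrences read the same bits of $Y$; the identity then forces the two rewritten words to agree on the overlap, hence at $x$. As every position of a word in $\mathcal E(A,B)$ is covered by the chain, the substituted string $W$ is a function of $Y$ alone, and $\phi(Y)=\overleftarrow{W}$ for every decomposition. This yields well-definedness, and with it the involution property.

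Finally I would read off the scores from the same substitution. An occurrence of $A$ at position $p$ in $Y$ forces $W$ to read $\overleftarrow{B}$ on $[p,p+\ell-1]$, hence $\phi(Y)=\overleftarrow{W}$ to contain an occurrence of $B$ on the reflected interval; this assignment is injective, so $N_A(Y)\le N_B(\phi(Y))$, and symmetrically $N_B(Y)\le N_A(\phi(Y))$. Applying these two inequalities to $\phi(Y)$ and using $\phi\circ\phi=\mathrm{id}$ gives the reverse inequalities, whence $N_A(Y)=N_B(\phi(Y))$ and $N_B(Y)=N_A(\phi(Y))$, which is the claimed exchange of scores.
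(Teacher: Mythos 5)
Your proof is correct, and while it rests on the same underlying symmetry as the paper's (reverse the order of the blocks and exchange $A\leftrightarrow B$, legitimate because $\Cor(C,D)=\Cor(\bar{D},\bar{C})$ for all $C,D\in\{A,B\}$ once $\c{A}{A}=\c{B}{B}$), it handles the two delicate steps by a genuinely different mechanism. The paper argues syntactically, at the level of decompositions: independence of the chosen decomposition is reduced to comparing $C_{1}\I^{m_1}C_{2}\I^{m_2}C_{3}$ with $C_{1}\I^{m}C_{3}$ and settled by a length count ($|\phi(Y)|<2\ell$ forces the two outer blocks of $\phi(Y)$ to overlap), and the score identity is obtained from the maximal decomposition, which gives $N_A(\phi(Y))\geq N_B(Y)$ and $N_B(\phi(Y))\geq N_A(Y)$ at once, strict inequality being excluded by contradiction (extra occurrences in $\phi(Y)$ would pull back, through $\phi$, to extra occurrences in $Y$). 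You argue semantically, position by position: your observation that $\overleftarrow{\phi(Y)}$ is the in-place substitution of $Y$ replacing each occurrence of $A$ (resp.\ $B$) by $\overleftarrow{B}$ (resp.\ $\overleftarrow{A}$), consistent because two occurrences sharing a position must overlap with an overlap length lying in $\Cor(C,D)$ for $C$ the left and $D$ the right occurrence, hence in $\Cor(\bar{D},\bar{C})$, characterizes $\phi(Y)$ without reference to any decomposition; I checked the interval computation behind this (the block $C_i$ of $Y$ and the block $\bar{C}_i$ of $\phi(Y)$ occupy mirror intervals, hence the same interval after reversal), and it is correct. Your route buys generality and completeness: it compares arbitrary pairs of decompositions in one stroke, where the paper's reduction to the three-block versus two-block case is asserted rather than justified, and it transfers every occurrence of $A$ or $B$ in $Y$ --- not only those of the maximal decomposition --- to an occurrence in $\phi(Y)$, after which $\phi\circ\phi=\mathrm{id}$ upgrades the two inequalities to the claimed equalities, much as in the paper's final bootstrap. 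What the paper's route buys is brevity: no positional bookkeeping, and its score inequalities are immediate from the definition of $\phi$ applied to the maximal decomposition. Either argument is valid; yours would in fact serve as a self-contained replacement with a net gain in rigor at the well-definedness step.
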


\begin{proof}We first prove  that $\phi(Y)$ is well defined \emph{i.e.} if $m_i\in \text{Cor}(C_i,C_{i+1})$, then $m_i\in \text{Cor}(\bar{C}_{i+1},\bar{C}_i)$. Recall that we assume that $A,B$ have the same auto-correlation, \emph{i.e.} $\text{Cor}(A)=\text{Cor}(B)$. We have two cases: 
\begin{itemize}
    \item Either $C_i=C_{i+1}$, for example $C_i=A$. Then $\bar{C}_i=\bar{C}_{i+1}=B$. Hence, we have $\text{Cor}(C_i,C_{i+1})=\text{Cor}(A)=\text{Cor}(B)=\text{Cor}(\bar{C}_{i+1},\bar{C}_i)$. 
    \item Or $C_i\neq C_{i+1}$, for example $(C_i,C_{i+1})=(A,B)$. Then $(\bar{C}_{i+1},\bar{C}_i)$ is also equal to $(A,B)$ and so $\text{Cor}(C_i,C_{i+1})=\text{Cor}(\bar{C}_{i+1},\bar{C}_i)$.
\end{itemize}
Note that $\phi$ does not depend of the decomposition chosen for $Y$. To justify this claim, it is enough to consider the case of words $Y$ with two decompositions $Y=C_{1}\I^{m_1}C_{2}\I^{m_2}C_{3}$ and $Y=C_{1}\I^{m}C_{3}$. Note that $|Y|=2\ell-m=3\ell-m_1-m_2$. Applying $\phi$ with the first decomposition we get $\phi(Y)=\bar{C}_{3}\I^{m_2}\bar{C}_{2}\I^{m_1}\bar{C}_{1}$ and since $|\phi(Y)|=|Y|<2\ell$,  necessarily, $\bar{C}_{3}$ and  $\bar{C}_{1}$ overlap in the writing of $\phi(Y)$ and thus we also have
$\phi(Y)=\bar{C}_{3}\I^{m}\bar{C}_{1}$. 
The fact that $\phi$ is an involution is clear.

We write now $Y$ with its maximal decomposition. By construction, we directly get that
$N_A(\phi(Y)) \geq N_B(Y)$ and
$N_B(\phi(Y)) \geq N_A(Y)$. 
We claim that, by maximality of the decomposition, there is equality in these two inequalities. Indeed, consider the case of words $Y$ whose maximal decomposition consist in two words: $C_{1}\I^{m}C_{2}$. Among those words $Y$, only the words where $C_1=C_2$ have to be considered.  If $Y_0= C_1 \I^{m}C_1$, it holds $\phi(Y_0) = \bar C_1 \I^{m} \bar C_1$. Now, assume that
$\phi(Y_0) = \bar C_1 \I^{m_1} C_3 \I^{m_2}  \bar C_1$ for some $C_3 \in \{A,B\}$. The map $\phi$ being independent of the decomposition, we get $Y_0  = \phi(\phi(Y_0)) = C_1 \I^{m_1} \bar{C_3} \I^{m_2} C_1$, which has a distinct count of the $\bar{C_3}$-word; this negates the fact that $Y_0= C_1 \I^{m}C_1$ is the maximal decomposition in the first place.
\end{proof}

The function $\phi$ defined in the previous proposition can be extended to a function on patterns. For $(E_1,\ldots,E_k)\in\mathcal{E}(A,B)^k$, we set
$$\phi(E_1,\ldots,E_k) \defeq (\phi(E_k),\ldots,\phi(E_1))$$
which defines an involution on $\mathcal{E}(A,B)^k$.
Given a pattern $M=(E_1,\ldots,E_k)\in \mathcal{E}(A,B)^k$ and $n\geq 1$, we define $L_M(n)$ as the number of words of length $n$ with pattern $M$: 
\begin{equation*}
 L_M(n)\defeq|\{ \hbox{word $Y$} ,\; |Y| = n \hbox{ and } \motif{Y}{A,B} = M \} |.\end{equation*}
To prove Theorem \ref{bijection}, it is sufficient to show that, for any pattern $M$ and any $n\geq 1$, we have
\begin{equation}\label{egalitemot}
 L_M(n)=L_{\phi(M)}(n).  \end{equation}
Indeed, if $Y$ is a word such that $\motif{Y}{A,B} = M$ and $Z$ is a word such that $\motif{Z}{A,B} = \phi(M)$, we have 
$$(N_A(Y),N_B(Y))=(\sum_{i=1}^k \! N_A(E_i),\sum_{i=1}^k \!N_B(E_i))=(\sum_{i=1}^k \! N_B(\phi(E_i)),\sum_{i=1}^k \!N_A(\phi(E_i)))=(N_B(Z),N_A(Z)).$$
In view of Equality \eqref{egalitemot}, we conclude that
\begin{eqnarray*}
    P\big((N_A(X_n),N_B(X_n))=(a,b)\big) \; =  \; \frac{1}{2^n}\hspace{-0.5cm}\sum_{\substack{\hbox{\scriptsize{pattern $M$ s.t.}}\\ \hbox{\tiny{$(N_A(M),N_B(M))=(a,b)$}}}}\hspace{-0.5cm}L_M(n)  \; &=&  \; \frac{1}{2^n}\hspace{-0.5cm}\sum_{\substack{\hbox{\scriptsize{pattern $M$ s.t.}}\\\hbox{\tiny{$(N_A(M),N_B(M))=(a,b)$}}}}\hspace{-0.5cm}L_{\phi(M)}(n)\\
    &=& P\big((N_B(X_n),N_A(X_n))=(a,b)\big).
\end{eqnarray*}
In order to establish \eqref{egalitemot}, we prove the more slightly more precise result: 
\begin{lemma}
\label{1}
 For any pattern $M=(E_1,\ldots,E_k)$, for any   $I=(i_0,\ldots,i_k)$, set 
   $$
L^{M}(I) \defeq |\{ Y=X_0E_1X_1\ldots E_kX_k :  \motif{Y}{A,B} = M; \forall j, \, |X_j| = i_j \} |.$$
Then $$|L^{M}(I)|=|L^{\phi(M)}(I')|$$
where $I' = (i_k,\ldots,i_0)$.
\end{lemma}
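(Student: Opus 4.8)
The plan is to reduce the count $L^{M}(I)$ to a product of independent local ``gap counts'', to match these factors term by term under $\phi$, and to see that the only surviving discrepancy is a symmetry between gaps framed by $A$ and gaps framed by $B$ — which is exactly what $\Cor(A)=\Cor(B)$ buys. First I would establish a factorization. Fix $M=(E_1,\dots,E_k)$ and lengths $I=(i_0,\dots,i_k)$. Since each overlap $E_j$ has length at least $\ell$ while every occurrence of $A$ or $B$ has length exactly $\ell$, no occurrence can straddle a whole block; hence in $Y=X_0E_1X_1\cdots E_kX_k$ the constraint $\motif{Y}{A,B}=M$ decouples across the gaps. Concretely, $X_j$ is admissible iff the window $\mathrm{suff}_{\ell-1}(E_j)\,X_j\,\mathrm{pref}_{\ell-1}(E_{j+1})$ contains no occurrence of $A$ or $B$ meeting $X_j$ (with the one-sided convention for the end gaps $X_0,X_k$), a condition depending only on $i_j$, on $\mathrm{suff}_{\ell-1}(E_j)$ and on $\mathrm{pref}_{\ell-1}(E_{j+1})$; and those boundaries are just the length-$(\ell-1)$ suffix/prefix of the last/first letter-block of the adjacent overlap. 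Writing $g(s,t,m)$ for the number of admissible gaps of length $m$ between right-boundary $s$ and left-boundary $t$, this gives $L^{M}(I)=\prod_{j=0}^{k} g\big(\mathrm{suff}(E_j),\mathrm{pref}(E_{j+1}),i_j\big)$.

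Next I would match the factors under $\phi$. By the Proposition, $\phi$ preserves the length of each block and reverses their order, carrying the first/last letter-block ($A$ or $B$) of $E$ to the complementary letter-block of $\phi(E)$. So the gap of $M$ lying between $E_j$ and $E_{j+1}$ is paired with the gap of $\phi(M)$ lying between $\phi(E_{j+1})$ and $\phi(E_j)$, of the same length $i_j$, and if its boundary letter-blocks are $(P,Q)$ then those of the paired gap are $(\bar Q,\bar P)$. Consequently $L^{\phi(M)}(I')$ is the same product with each factor $g(\mathrm{suff}(P),\mathrm{pref}(Q),i_j)$ replaced by $g(\mathrm{suff}(\bar Q),\mathrm{pref}(\bar P),i_j)$, and the equality $L^{M}(I)=L^{\phi(M)}(I')$ follows factor by factor once I prove the local identity $g(\mathrm{suff}(P),\mathrm{pref}(Q),m)=g(\mathrm{suff}(\bar Q),\mathrm{pref}(\bar P),m)$ for all $m$ and all $P,Q\in\{A,B\}$. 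When $\{P,Q\}=\{A,B\}$ the two sides are literally the same; the only genuine cases are $(P,Q)=(A,A)$ and $(B,B)$, together with their one-sided analogues at the ends of the word, all of which say the same thing: \emph{the number of admissible gaps of a given length framed by two copies of $A$ equals the number framed by two copies of $B$}.

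This is where $\Cor(A)=\Cor(B)$ is used. I would argue that $g$ depends only on the correlation data of the words involved: which occurrences can ``bridge'' into a gap from a framing copy of a word $C$ is governed precisely by $\Cor(C)$ and by the cross-correlations of $C$ with the other forbidden word, so the count is determined by $\Cor(A)$, $\Cor(B)$, $\c{A}{B}$ and $\c{B}{A}$ (the Guibas--Odlyzko correlation-polynomial phenomenon, which can be made explicit through the cluster/transfer-matrix formalism). Now reversing every word is a length-preserving bijection on admissible gaps that fixes each autocorrelation but interchanges the two cross-correlations $\c{A}{B}\leftrightarrow\c{B}{A}$; combined with $\Cor(A)=\Cor(B)$, it sends the $A$-framed count onto the $B$-framed count, and likewise for the one-sided versions, closing the argument.

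The hard part will be the input of the last paragraph: showing that $g$ depends on nothing beyond the correlation data, so that the reversal symmetry can be invoked. The delicate regimes are the short gaps $m<\ell-1$, where a single bridging occurrence may touch \emph{both} framing words at once (so the two boundary constraints interact and must be counted jointly), and the end-of-word gaps, where only a one-sided constraint is present. Once this correlation-polynomial bookkeeping is set up carefully and uniformly, the decoupling of the first paragraph and the term-by-term matching of the second are routine, and the reversal identity finishes the proof.
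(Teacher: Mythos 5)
Your proposal is correct, but its second half takes a genuinely different route from the paper. The first half is the same argument: your factorization of $L^{M}(I)$ into independent gap counts and the term-by-term matching of those factors under $\phi$ is exactly how the paper reduces Lemma \ref{1} to the two identities \eqref{e:recurrence}, namely $L^{(A,A)}(0,i,0)=L^{(B,B)}(0,i,0)$ and $L^{(B)}(0,i)=L^{(A)}(i,0)$ (your mixed boundary cases $(A,B)$ and $(B,A)$ are trivial there for the same reason as here). The divergence is in how these identities are proved. The paper stays elementary and self-contained: it runs an induction on the gap length $i$, writing $L^{(A)}(i,0)=2^i-\sum |L^{M}(I)|$ where the sum ranges over all richer patterns ending with $A$ (with last gap $0$); every such pattern has all its gaps of length $<i$, so the induction hypothesis applies term by term, and the involution $M\mapsto\phi(M)$ turns the subtracted sum into the corresponding sum over patterns starting with $B$, which equals $2^i-L^{(B)}(0,i)$. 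You instead invoke (i) correlation-invariance of framed gap counts (the Guibas--Odlyzko phenomenon) and (ii) the reversal bijection, which fixes both auto-correlations and exchanges $\c{A}{B}\leftrightarrow\c{B}{A}$, so that under $\Cor(A)=\Cor(B)$ the correlation matrix governing the $A$-framed count, once transposed by reversal, is literally the one governing the $B$-framed count. This is sound: claim (i) is indeed provable by the cluster/inclusion--exclusion method, because joint consistency of a set of prescribed occurrences reduces to pairwise consistency, which is read off the correlation sets, so each term of the signed sum depends only on the correlation matrix; and your route is arguably more conceptual, since it explains \emph{why} the inter-correlations cannot matter (they are swapped, not cancelled --- note that your reversal step is therefore indispensable, not cosmetic, as equal auto-correlations alone leave the two matrices differing exactly by that swap). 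What it costs is that the crux is outsourced to (i), which you assert but do not prove; setting up that bookkeeping uniformly (short gaps where a single bad occurrence touches both framing words, and the one-sided end gaps) is a piece of work at least as long as, and less elementary than, the paper's induction. One small repair to your first step: when $i_j=0$, the admissibility condition ``no occurrence meeting $X_j$'' must also exclude occurrences straddling the junction between $E_j$ and $E_{j+1}$.
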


\begin{proof}
 
 Let us remark that we have
$$L^M(I)=L^{E_1}(i_0,0)\left(\prod_{j=1}^{k-1} L^{(E_j,E_{j+1})}(0,i_j,0)\right)L^{E_k}(0,i_k)$$
and 

$$L^{\phi(M)}(I')=L^{\phi(E_k)}(i_k,0)\left(\prod_{j=1}^{k-1} L^{(\phi(E_{j+1}),\phi(E_{j}))}(0,i_j,0)\right)L^{\phi(E_1)}(0,i_0).$$
Moreover, the value of $L^{(E_j,E_{j+1})}(0,i_j,0)$ only depends on $i_j$, on the word ending $E_j$ and on the word beginning $E_{j+1}$. For example, if $E_j$ ends with an $A$ and $E_{j+1}$ starts with a $B$, we have
$$L^{(E_j,E_{j+1})}(0,i_j,0)=L^{(A,B)}(0,i_j,0).$$
 Now, if  $E_j$ ends with an $A$ and $E_{j+1}$ starts with a $B$, then $\phi(E_{j+1})$ ends with an $A$ and $\phi(E_{j})$ starts with an $B$. Thus, in this case, we directly get 
 $$L^{(\phi(E_{j+1}),\phi(E_{j}))}(0,i_j,0)=L^{(E_j,E_{j+1})}(0,i_j,0)=L^{(A,B)}(0,i_j,0).$$
The situation is more involved when  $E_j$ ends with the same word than $E_{j+1}$ starts with, let say the word $A$. Then indeed
 $$L^{(\phi(E_{j+1}),\phi(E_{j}))}(0,i_j,0)=L^{(B,B)}(0,i_j,0) \mbox{ while } L^{(E_j,E_{j+1})}(0,i_j,0)=L^{(A,A)}(0,i_j,0).$$
Besides, if $E_1$ starts  with $A$, then $\phi(E_1)$ ends with a $B$, and we have 
$$L^{\phi(E_{1})}(0,i_0)=L^{(B)}(0,i_0) \mbox{ while }  L^{E_1}(i_0,0)=L^{(A)}(i_0,0),$$
and a similar assertion holds for $L^{E_k}(0,i_k)$ and  $L^{\phi(E_k)}(i_k,0)$.
Combining all these remarks, we see that Lemma \ref{1} will be proved as soon as we establish the following  equality: for each $i \geq 0$,
\begin{equation}\label{e:recurrence}
L^{(A,A)}(0,i,0)=L^{(B,B)}(0,i,0) \qquad\mbox{and}\qquad  L^{(B)}(0,i)=L^{(A)}(i,0).
\end{equation}
We prove \eqref{e:recurrence} by induction on $i$. The proposition clearly holds for $i=0$. Assume it holds for $k\le i-1$. Thus,  for any pattern $M$, if $I=(i_0,\ldots,i_k)$ with $i_j<i$ for all $j$, we get $$|L^{M}(I)|=|L^{\phi(M)}(I')|.$$ 
Let us now prove that $L^{(B)}(0,i)=L^{(A)}(i,0)$.
Writing $|I|= \sum_{j=0}^k i_j$ if $I=(i_0, \ldots, i_k)$ and
$|M|= \sum_{i=1}^k |E_i|$ for the length of the pattern
$M=(E_1,\ldots, E_k)$, we find that
\begin{eqnarray*}
L^{(A)}(i,0)=
   &= &|\{ \hbox{words $X A$} :\; |X| = i \hbox{ and } \motif{X A}{A,B} = A \} |\\
    &=&2^i-\sum_{k\geqslant 1}\hspace{0.1cm}\sum_{\substack{M=(E_1,\ldots,E_k)\neq (A)\\\hbox{\scriptsize{$E_k\mbox{ ends with }A$}}}} \hspace{0.3cm} \sum_{\substack{I=(i_0,\ldots,i_{k-1},0)\\|I|+|M|=i+|A|}}|L^M(I)|\\
     &=&2^i-\sum_{k\geqslant 1}\hspace{0.1cm}\sum_{\substack{M=(E_1,\ldots,E_k)\neq (A)\\\hbox{\scriptsize{$E_k\mbox{ ends with }A$}}}} \hspace{0.3cm} \sum_{\substack{I=(i_0,\ldots,i_{k-1},0)\\|I|+|M|=i+|A|}}|L^{\phi(M)}(I')|.
\end{eqnarray*}
At this point, we use the recurrence assumption noticing that $|I|<i$ since $|M|>|A|$. Recalling that if $M$ ends with $A$, then $\phi(M)$ starts with $B$, we see that the last line is also equal to
$$2^i-\sum_{k\geqslant 1}\hspace{0.1cm}\sum_{\substack{M=(E_1,\ldots,E_k)\neq (B)\\\hbox{\scriptsize{$E_1\mbox{ starts with }B$}}}}\hspace{0.3cm}\sum_{\substack{I=(0,i_1,\ldots,i_{k})\\ |I|+|M|=i+|B|}}|L^M(I)| \; = \; L^{(B)}(0,i).$$
The equality $L^{(A,A)}(0,i,0)=L^{(B,B)}(0,i,0)$ is proved with a similar argument.
\end{proof}

   {
\small
 
\bibliographystyle{plain}
\bibliography{biblio.bib}

\begin{thebibliography}{10}

\bibitem{Blom84}
Gunnar Blom.
\newblock On the stochastic ordering of waiting times for patterns in sequences of random digits.
\newblock {\em Journal of applied probability}, 21(4):730--737, 1984.

\bibitem{Mansson99}
Isa Cakir, Ourania Chryssaphinou, and Marianne M{\aa}nsson.
\newblock On a conjecture by eriksson concerning overlap in strings.
\newblock {\em Combinatorics, Probability and Computing}, 8(5):429--440, 1999.

\bibitem{Zeilberger24}
Shalosh~B. Ekhad and Doron Zeilberger.
\newblock How to answer questions of the type: If you toss a coin $n$ times, how likely is {HH} to show up more than {HT}?
\newblock {\em arxiv:2405.13561}, 2024.

\bibitem{Eriksson97}
Kimmo Eriksson.
\newblock Autocorrelation and the enumeration of strings avoiding a fixed string.
\newblock {\em Combinatorics, Probability and Computing}, 6(1):45–48, 1997.

\bibitem{Grimmett24}
Geoffrey~R. Grimmett.
\newblock {A}lice and {B}ob on $\bold{X}$: reversal, coupling, renewal, 2024.

\bibitem{Guibas-Odlyzko-81}
Leonidas~J Guibas and Andrew~M Odlyzko.
\newblock String overlaps, pattern matching, and nontransitive games.
\newblock {\em Journal of Combinatorial Theory, Series A}, 30(2):183--208, 1981.

\bibitem{Li-80}
Shuo-Yen~Robert Li.
\newblock A martingale approach to the study of occurrence of sequence patterns in repeated experiments.
\newblock {\em Annals of Probability}, 8(6):1171--1176, 1980.

\bibitem{LittX}
Daniel Litt.
\newblock \url{https://x.com/littmath/status/1769044719034647001}, March 16 2024.

\bibitem{Mansson02}
Marianne Månsson.
\newblock Pattern avoidance and overlap in strings.
\newblock {\em Combinatorics, Probability and Computing}, 11(4):393–402, 2002.

\bibitem{NicaYouTube}
Mihai Nica.
\newblock {T}he {C}oin {F}lip {G}ame that {S}tumped {T}witter: {A}lice {HH} vs {B}ob {HT}.
\newblock \url{https://www.youtube.com/watch?v=BAiuFOwhAWw&t=13s}.

\bibitem{Nica}
Mihai Nica.
\newblock Alice {HH} vs {B}ob {HT} using an {E}dgeworth expansion.
\newblock \url{https://sites.math.rutgers.edu/~zeilberg/mamarim/mamarimhtml/MihaiNicaAliceBob.pdf}, June 11 2024.

\bibitem{Penney-69}
Walter Penney.
\newblock {{P}enney-{A}nte, P}roblem 95.
\newblock {\em Journal of Recreational Mathematics}, 2(4), October 1969.

\bibitem{RameshX1}
Sridhar Ramesh.
\newblock \url{https://x.com/RadishHarmers/status/1770217475960885661}, March 19 2024.

\bibitem{RameshX2}
Sridhar Ramesh.
\newblock \url{https://x.com/RadishHarmers/status/1770530578179198981}, March 20 2024.

\bibitem{Segert24+}
Simon Segert.
\newblock \url{https://github.com/SimonSegert/simonsegert.github.io/blob/main/misc/coin_tossing.pdf}, 2024.

\bibitem{Segert24}
Simon Segert.
\newblock A proof that {HT} is more likely to outnumber {HH} than vice versa in a sequence of $n$ coin flips.
\newblock {\em arxiv:2405.16660}, 2024.

\bibitem{SelbyX}
Alex Selby.
\newblock \url{https://x.com/alexselby1770/status/1769795239667994765}, March 20 2024.

\bibitem{MalloX}
D.L. Yonge-Mallo.
\newblock \url{https://x.com/dlyongemallo/status/1772280170096775170}, March 25 2024.

\bibitem{Zeilbergerjournal}
Doron Zeilberger.
\newblock Personal (online) journal.
\newblock \url{https://sites.math.rutgers.edu/~zeilberg/mamarim/mamarimhtml/litt.html}.

\end{thebibliography}
}

For the ease of reference, we reproduce here\footnote{``Consider a random walk in which one takes equally likely steps of one unit up or one unit down, but with different distributions of speeds. (E.g., maybe up steps take one hour, while down steps have probability 1/2 of taking 2 hours, 1/4 of taking 3 hours, 1/8 of 4 hours, etc).
The time it takes to return to the origin is independent of whether the first step is up and last step is down or vice versa, as doing the same steps in reverse order has the same probability. Thus, for any fixed walk time, the last step away from the origin begun before the time limit is equally likely to be up or down. Thus, at the end when "the buzzer goes off", one is equally likely to be above or below the origin (possibly in the middle of an uncompleted step).
Applied to our game, with HH as a step up in one unit of time and HT$^n$H  as a step down over $n+1$ units of time, this says we are equally likely to end above the origin (Alice wins or we are in the middle of an HT$^n$H step which has tied the game) or below it (Bob wins).
Since it is indeed possible to end in the middle of a game-tying HT$^n$H step (e.g., if the game consists of HHT followed by all T's), Alice is less likely to win than Bob.
QED.
The salient difference is that the "buzzer" can cut off HT$^n$H in the middle (after awarding Bob a game-tying point but before returning to H), which it cannot do for HH. The random walk framing perhaps allows some ready generalization to other interesting problems.''} \emph{verbatim} the tweets \cite{RameshX1} with the permission of their author Sridhar Ramesh.

\end{document}